\newcommand{\Ker}{\mathrm{Ker}}
\newcommand{\val}{\mathrm{val}}
\newcommand{\ord}{\mathrm{ord}}
\newcommand{\Gal}{\mathrm{Gal}}
\newtheorem{thm}{Theorem}
\newtheorem{pro}[thm]{Proposition}
\newtheorem{lem}[thm]{Lemma}
\newtheorem{cor}[thm]{Corollary}
\theoremstyle{definition}
\newtheorem*{rem}{Remark}
\newtheorem*{rems}{Remarks}
\def\Z{\mathbb{Z}}
\def\Q{\mathbb{Q}}
\title{The distribution and density of cyclic groups of the reductions of an elliptic curve over a function field}
\date{}
\author{M\'arton Erd\'elyi \\ merdelyi@freestart.hu}
\begin{document}
\maketitle

\begin{abstract}
Let $K$ be a global field of finite characteristic $p\geq2$, and let $E/K$ be a non-isotrivial elliptic curve. We give an asympotoic formula of the number of places $\nu$ for which the reduction of $E$ at $\nu$ is a cyclic group. Moreover we determine when the Dirichlet density of those places is 0.
\end{abstract}

\section{Statement of results}

Let $K$ be a global field of characteristic $p$ and genus $g_K$, and let $k=\mathbb{F}_q \subset K$ ($q=p^f$) be the algebraic closure of $\mathbb{F}_p$ in $K$. We denote by $V_K$ the set of places of $K$. For $\nu \in V_K$, we denote by $k_\nu$ the residue field of $K$ at $\nu$, and by $\deg(\nu):= [k_\nu : \mathbb{F}_q]$ the degree of $\nu$. Let $\overline{k}$ be an algebraic closure of $k$. Denote $\phi:(x\mapsto x^q)\in\Gal(\overline{k}/k)$ the $q$-Frobenius. Let $k_r|k$ be the unique degree $r$ extension in $\overline{k}$.

Let $E/K$ be an elliptic curve over $K$ with $j$-invariant $j_E \notin k$, which we shall standardly call non-isotrivial. We denote by $V_{E/K}$ the set of places of $K$ for which the reduction $E_\nu/k_\nu$ is smooth and $|\overline{V}_{E/K}|=\sum_{\nu\notin V_{E/K}}\mathrm{deg}(\nu)$. For $n\in\mathbb{N}\setminus\{0\}$ let $V_{E/K}(n)=\{\nu\in V_{E/K}|\deg(\nu)=n\}$.

From the theory of elliptic curves we know that for $\nu\in V_{E/K}$, $E_\nu(k_\nu) \simeq \Z/d_\nu \Z \times \Z/d_\nu e_\nu \Z$ for nonzero integers $d_\nu, e_\nu$, uniquely determined by $E$ and $\nu$. We call the integers $d_\nu$ and $d_\nu e_\nu$ the elementary divisors of $E_\nu$.

The goal of this paper is to extend the results of \cite{CT} about the distribution of the places $\nu \in V_{E/K}$ for which $E_\nu(k_\nu)$ is a cyclic group. Such questions have been investigated for the reductions of an elliptic curve defined over $\Q$ (e.g. in \cite{BaSh}, \cite{Co1}, \cite{Co2}, \cite{CoMu}, \cite{GuMu}, \cite{Mu1}, \cite{Mu2}, \cite{Se2}), mainly in relation with the elliptic curve analogue of Artin's primitive root conjecture formulated by Lang and Trotter in \cite{LaTr}. This latter conjecture was investigated in the function field setting $E/K$ by Clark and Kuwata \cite{ClKu}, and by Hall and Voloch \cite{HaVo} (see also Voloch's work on constant curves \cite{Vo1}, \cite{Vo2}). In \cite{ClKu}, a particular emphasis was placed on the study of the cyclicity of $E_\nu(k_\nu)$.

~\\

In this paper we obtain an explicit asymptotic formula for the number of places $\nu \in V_{E/K}$, of fixed degree, for which $E_\nu(k_\nu)$ is cyclic. Our result is a direct extension of the work of \cite{CT} which worked in finite characteristic $p>3$.

\begin{thm}\label{aszimptotika}
Let $E/K$ be a non-isotrivial elliptic curve. For all $\varepsilon>0$ there exists $c=c(K,E,\varepsilon)$ such that for all $n\in\mathbb{N}$ we have
\[\left|\#\left(\nu\in V_{E/K}(n)|E_\nu(k_\nu)\mathrm{~is~cyclic}\right)-\delta(E/K,1,n)\frac{q^n}{n}\right|\leq c\frac{q^{n/2+\varepsilon}}{n},\]
where
\[\delta(E/K,1,n)=\sum_{\stackrel{m\leq q^{n/2}+1}{m|q^{n-1}}}\frac{\mu(m)\mathrm{ord}_m(q)}{|K(E[m]):K|},\]
a $\mu$ is the Moebius function and $\ord_m(q)$ denotes the multiplicative order of $q$ modulo $m$ for $m\in\mathbb{N}$, $(m,q)=1$.
\end{thm}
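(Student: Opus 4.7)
The plan is to Möbius-invert the cyclicity condition into a sum over $m$ of counts of places where $E[m]$ embeds into $E_\nu(k_\nu)$, apply a function-field Chebotarev-type theorem to each $m$, and control the cumulative error by splitting the sum at a threshold in $m$.

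Using $[d_\nu=1]=\sum_{m\mid d_\nu}\mu(m)$ I would first rewrite
\[
\#\{\nu\in V_{E/K}(n):E_\nu(k_\nu)\text{ is cyclic}\}=\sum_m\mu(m)\,N_n(m),
\]
where $N_n(m):=\#\{\nu\in V_{E/K}(n):E[m]\subset E_\nu(k_\nu)\}$. Only $m$ coprime to $p$ contribute, since an elliptic curve over a finite field has $p$-rank at most one. Hasse's bound $d_\nu\leq q^{n/2}+1$ and the Galois equivariance of the Weil pairing $E[m]\times E[m]\to\mu_m$ (forcing $\mu_m\subset k_\nu$ and hence $m\mid q^n-1$) truncate the sum to $m\leq q^{n/2}+1$ with $m\mid q^n-1$. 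For each such $m$, the condition $E[m]\subset E_\nu(k_\nu)$ is equivalent to $\mathrm{Frob}_\nu$ acting trivially on $E[m]$, i.e.\ $\mathrm{Frob}_\nu=1$ in $\Gal(L_m/K)$, where $L_m:=K(E[m])$. The Weil pairing identifies the exact constant field of $L_m$ as $k_{\ord_m(q)}$, so $L_m/K\cdot k_{\ord_m(q)}$ is a geometric Galois extension. The function-field Chebotarev density theorem (equivalently, the prime number theorem for $L_m$ together with Galois counting) then gives, when $\ord_m(q)\mid n$ (automatic from $m\mid q^n-1$),
\[
N_n(m)=\frac{\ord_m(q)}{[L_m:K]}\cdot\frac{q^n}{n}+\mathrm{err}(m,n),
\]
and summing $\mu(m)N_n(m)$ over admissible $m$ reproduces the claimed main term $\delta(E/K,1,n)\,q^n/n$.

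The principal obstacle is to bound the total error $\sum_m|\mathrm{err}(m,n)|$ by $c\,q^{n/2+\varepsilon}/n$: the genus $g_{L_m}$ grows polynomially in $m$, so a naive sum of Chebotarev errors over all $m\leq q^{n/2}+1$ is prohibitively large. I would overcome this by splitting at a threshold $M$ to be optimised. For $m\leq M$, effective Chebotarev applies, making use of the bound $[L_m:K]\gg m^{4-\varepsilon}$ (the function-field analogue of Serre's open image theorem for non-isotrivial $E/K$, cf.\ \cite{ClKu}) and $g_{L_m}=O([L_m:K])$ (Riemann--Hurwitz, since $L_m/K$ ramifies only at the finite set of places of bad reduction, together possibly with places above $p$). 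For $m>M$, I would discard the main term and bound $N_n(m)$ directly: $E[m]\subset E_\nu(k_\nu)$ forces $m^2\mid q^n+1-a_\nu$ with $|a_\nu|\leq 2q^{n/2}$, so the admissible Frobenius traces number $O(q^{n/2}/m^2)$, and a standard class-number estimate controls the number of degree-$n$ places with a given trace. Balancing the two regimes by taking $M$ a small power of $q^n$ absorbs the loss into $q^{n\varepsilon}$. This extends the approach of \cite{CT} beyond characteristic $p>3$; in the residual cases $p\in\{2,3\}$ the only additional subtlety is possible wild ramification of $L_m$ at places above $p$, which affects only a bounded set of $m$ and is absorbed into the error analysis.
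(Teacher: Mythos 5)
Your Möbius inversion, the truncation via Hasse and the Weil pairing, and the reduction to counting places that split completely in $K(E[m])/K$ all agree with the paper. But from there the proposal diverges from, and in fact undershoots, the argument in a way that matters. You assume that the Chebotarev error per modulus $m$ grows with $g_{L_m}$, which is why you introduce a threshold $M$, invoke $[L_m:K]\gg m^{4-\varepsilon}$ and Riemann--Hurwitz, and need a trace-counting argument for $m>M$. The paper instead invokes Murty--Scherk's effective Chebotarev theorem, whose error term is $2\bigl((3g_K+(\rho_{L/K}+1)|S|)\frac{q^{n/2}}{n}+\frac{|S|}{2n}\bigr)+|S|$: it depends on the genus of the \emph{base} field $K$, on $|S|$ (the degree of the ramification locus, which for $K(E[m])/K$ is bounded by $|\overline{V}_{E/K}|$ independently of $m$), and on the wild-ramification parameter $\rho_{L/K}$ --- not on $g_{L_m}$. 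Once $\rho$ is bounded uniformly in $m$, each term contributes $O(q^{n/2}/n)$, and summing over the at most $d(q^n-1)=O_\varepsilon(q^{n\varepsilon})$ admissible $m$ immediately gives the claimed $O(q^{n/2+\varepsilon}/n)$. No threshold split, and no auxiliary trace-count, is needed.

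The genuine gap is precisely where you wave off wild ramification: ``which affects only a bounded set of $m$ and is absorbed into the error analysis.'' That is incorrect. Wild ramification of $K(E[m])/K$ occurs at the (fixed, bounded) set of places of additive reduction, but potentially for \emph{every} $m$; what one must show is that the parameter $\rho_{K(E[m])/K}$ that enters the error is bounded independently of $m$, and this is exactly the paper's new content for $p\in\{2,3\}$. It is accomplished by Lemma~\ref{rhobecsles} and Proposition~\ref{Chebotarev}: take a finite extension $K'/K$ after which $E$ has good or split multiplicative reduction (Silverman VII.5.4), so that $K'(E[m])/K'$ is tamely ramified for all $m$; the monotonicity/multiplicativity of $\rho$ in towers then yields $\rho_{K(E[m])/K}\le\rho_{K'(E[m])/K}=\rho_{K'/K}$, a constant. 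Without some argument of this type your error analysis does not close, and the proposed substitute (a class-number estimate for degree-$n$ places with a prescribed Frobenius trace) is not justified in this function-field setting and, in any case, is not what the uniform Chebotarev input requires.
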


Here the second parameter of $\delta$ refers to $d_\nu=1$. The exact same calculation for \break $\#\left(\nu\in V_{E/K}(n)|d_\nu=d\right)$ yields a same result.

We are also able to answer a previously unsolved question concerning the Dirichlet density $\delta(E/K,1)$ of places $\nu$ such that $E_\nu(k_\nu)$ is cyclic: we can characterize when this density is 0.

\begin{thm}\label{suruseg}
Let $E/K$ be a non-isotrivial elliptic curve. Then $\delta(E/K,1)=0$ if and only if $\delta(E/K,1,1)=0$.
\end{thm}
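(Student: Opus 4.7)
The approach is to pin down a single algebraic condition on $E/K$ that simultaneously characterizes both $\delta(E/K,1)=0$ and $\delta(E/K,1,1)=0$: namely, the existence of a prime $\ell$ for which $K(E[\ell])=K$, i.e.\ $E[\ell]\subset E(K)$.

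Suppose first that such an $\ell$ exists. Via the Weil pairing $\mu_\ell\subset K$, so $\ell\mid q-1$ (and in particular $\ell\neq p$). Reduction injects $E[\ell]$ into $E_\nu(k_\nu)$ at every $\nu\in V_{E/K}$, forcing $(\Z/\ell\Z)^{2}\subseteq E_\nu(k_\nu)$, hence $d_\nu\geq\ell>1$ and $E_\nu(k_\nu)$ is never cyclic. Thus $\delta(E/K,1)=0$ at once. A Hasse--Weil estimate at any good degree-one place (or at the minimal-degree place of $V_{E/K}$) shows $\ell\leq q^{1/2}+1$, so $\ell$ lies in the support of the truncated Mobius sum defining $\delta(E/K,1,1)$. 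Pairing $m\leftrightarrow\ell m$ inside that sum then collapses all terms to zero, yielding $\delta(E/K,1,1)=0$.

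Conversely, assume $K(E[\ell])\neq K$ for every prime $\ell$. The nonisotriviality of $E$ together with the function-field analogue of Serre's open-image theorem (Igusa, Hall) yields $|K(E[\ell]):K|\gg_{E/K}\ell^{4}$ for all sufficiently large primes $\ell$, so $\sum_\ell 1/|K(E[\ell]):K|$ converges. This controls the tail of the Mobius sum and permits the Euler-type rewriting
\[\delta(E/K,1,n)=\prod_{\ell\mid q^n-1}\left(1-\frac{\ord_\ell(q)}{|K(E[\ell]):K|}\right)\cdot\bigl(1+o(1)\bigr),\]
where the $o(1)$ absorbs a bounded entanglement correction. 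Under our hypothesis every Euler factor is strictly positive, so $\delta(E/K,1,n)$ is bounded below by a positive constant for all $n\geq 1$. In particular $\delta(E/K,1,1)>0$, and Theorem \ref{aszimptotika} then gives $\#\{\nu\in V_{E/K}(n):E_\nu(k_\nu)\text{ cyclic}\}\sim c\,q^n/n$ with $c>0$, whence $\delta(E/K,1)>0$.

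The principal obstacle is the Euler-product rewriting: the torsion extensions $K(E[\ell])/K$ are generally not linearly disjoint, so the multiplicativity $|K(E[m]):K|=\prod_{\ell\mid m}|K(E[\ell]):K|$ fails at a finite set of exceptional primes. The open-image theorem guarantees this exceptional set is finite; tracking the entanglement explicitly and verifying that it does not affect the vanishing criterion constitutes the technical core of the proof. A secondary subtlety is that the truncation $m\leq q^{1/2}+1$ in $\delta(E/K,1,1)$ must capture every obstructing $\ell$, which is exactly the content of the Hasse--Weil bound applied in the easy direction above.
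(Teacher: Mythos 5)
Your proposal rests on the central claim that $\delta(E/K,1,1)=0$ (equivalently $\delta(E/K,1)=0$) holds \emph{if and only if} there is a prime $\ell$ with $K(E[\ell])=K$. This is false, and the paper constructs explicit counterexamples precisely to refute it (see the Proposition at the end of Section~3: if $q-1$ has at least three distinct prime divisors, one can build a non-isotrivial $E/K$ with cyclic torsion subgroup and $\delta(E/K,1)=0$). The forward (``easy'') direction of your dichotomy is correct, but the converse direction is where the argument breaks, and the break is exactly in the ``Euler-type rewriting'' step you flag as the technical core. Writing $\delta(E/K,1,1)=\sum_{m\mid q-1}\mu(m)/|K(E[m]):K|$ and rewriting it via $H=\Gal(K(E[q-1])/K)$ and $H_\ell=H\cap\Ker(\pi_\ell)$ shows by inclusion--exclusion that $\delta(E/K,1,1)=0$ iff $H=\bigcup_{\ell\mid q-1}H_\ell$. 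A finite group can be a union of proper subgroups once there are at least three of them (e.g.\ $H\simeq(\Z/2\Z)^2$ covered by its three subgroups of order two), so the vanishing does \emph{not} force some $H_\ell=H$, i.e.\ does not force $K(E[\ell])=K$. The ``bounded entanglement correction'' you invoke is not an $o(1)$ perturbation of a positive Euler product: entanglement among the $K(E[\ell])$ for $\ell\mid q-1$ can make the entire sum vanish identically.

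The paper's proof avoids this trap and is structurally different. For $\delta(E/K,1,1)>0\Rightarrow\delta(E/K,1)>0$ it factors divisors $m\mid q^n-1$ as $m=m_0m'$ with $m'\mid q-1$ and $(m_0,M(E/K))=1$ (using the CT independence results), obtaining $\delta(E/K,1,n)\geq\varepsilon>0$ for all $n$ with $(n,N(E/K))=1$; it then feeds Theorem~\ref{aszimptotika} into the Dirichlet-series quotient. For $\delta(E/K,1,1)=0\Rightarrow\delta(E/K,1)=0$ it shows $\delta(E/K,1,n)=0$ for \emph{every} $n$ by a purely group-theoretic covering argument: any $\sigma\in\Gal(K(E[m_0(q-1)])/K(E[m_0]))$ restricts to an element of $H$, which by hypothesis lies in some $H_\ell$, and this propagates to give $H^{(m_0)}=\bigcup_\ell H^{(m_0)}_\ell$. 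You would need to replace your ``there exists $\ell$ with $K(E[\ell])=K$'' criterion by the correct group-covering criterion $H=\bigcup_\ell H_\ell$ and rebuild both directions around it.
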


Surprisingly this can happen in the case, when the torsion subgroup of $E(K)$ is cyclic, as well.

~\\

We sketch the original proof of Theorem \ref{aszimptotika} in the following:

With simple inclusion-exclusion principle we get
\[\#\left(\nu\in V_{E/K}(n)|E_\nu(k_\nu)\mathrm{~is~cyclic}\right)=\sum_m\mu(m)\#\left(\nu\in V_{E/K}(n)|(\mathbb{Z}/m\mathbb{Z})^2\leq E_\nu(k_\nu)\right),\]
moreover the sum has very few nonzero terms: if $(\mathbb{Z}/m\mathbb{Z})^2\leq E_\nu(k_\nu)$ then
\begin{itemize}
\item by Hasse's theorem $|E_\nu(k_\nu)|\leq q^n+1+2\sqrt{q^n}$, thus $m\leq q^{n/2}+1$,
\item by the Weil-pairing the cyclotomic field $\mathbb{F}_q(\zeta_m)\leq k_\nu$, thus $m|q^n-1$.
\end{itemize}

By \cite{CT} Corollary 10, $(\mathbb{Z}/m\mathbb{Z})^2\subseteq E_\nu(k_\nu)$ if and only if $\nu$ splits completely in $K(E[m])/K)$ or equivalently the conjugacy class of the Frobenius at $\nu$ in $\Gal(K(E[m])/K)\leq\mathrm{GL}_2(\mathbb{Z}/m\mathbb{Z})$ is the set consisting of the identity element. Thus let $c_m$ be the integer for which the algebraic closure of $k$ in $K(E[m])$ is $\mathbb{F}_{q^{c_m}}$ and $\pi_1(n,K(E[m]/K)=\#(\nu\in V_{E/K}(n)|\nu\mathrm{~splits~completely~in~}K(E[m])/K))$.

Note that $c_m=\mathrm{ord}_m(q)$, which corresponds to the algebraic part of the field extension $K(E[m])/K$ and $\mathrm{Gal}(K(E[m])/K\mathbb{F}_{q^{c_m}})\leq\mathrm{SL}_2(\mathbb{Z}/m\mathbb{Z})$ describes the geometric part.

This enables us to use an effective version of the Chebotarev density theorem (\cite{MuSc} Theorem 2): we obtain that if $m,n\in\mathbb{N}$ such that $(m,p)=1$ and $\mathrm{ord}_m(q)|n$, then there exists $\rho=\rho(E,K,m)$ such that
\[\left|\pi_1(n,K(E[m])/K)-\frac{\mathrm{ord}_m(q)\cdot q^n}{[K(E[m]):K]}\right|\leq2\left((3g_K+(\rho+1)|\overline{V}_{E/K}|)\frac{q^{n/2}}{n}+\frac{|\overline{V}_{E/K}|}{2n}\right)+|\overline{V}_{E/K}|.\]

If $K(E[m])/K$ is at most tamely ramified for all $n$ (consequently if $p>3$) then $\rho=0$. The contribution of the paper to the proof is that we handle wildly ramified field extensions to bound $\rho$ independently from $m$. For this we need to do some local computation, this is contained in Section 2.

Now we can simply sum up these estimations and by standard arguments prove the theorem.

~\\

In Section 3 we investigate the density $\delta(E/K,1)$.

If $K(E[\ell])=K$ for some prime $\ell\neq p$ or equivalently if the torsion subgroup of $E(K)$ is not cyclic, it is clear that for all $\nu\in V(E/K)$ the group of $E_\nu(k_\nu)$ is not cyclic either. It is a natural question to ask whether the converse is true. In $\cite{CT}$ is proven that for the special case $K=\mathbb{F}_q(j_E)$ the answer is affirmative.

We start by showing an elliptic curve $E/K$ with cyclic torsion subgroup such that for infinitely many $n\in\mathbb{N}$ the value of $\delta(E/K,1,n)$ is 0 (and for at least one $n$ we have \break $\#(\nu\in V_{E/K}(n)|E_\nu(k_\nu)$ is cyclic$)=0$.)

Then we prove Theorem \ref{suruseg} and finally we construct an elliptic curve $E/K$ with cyclic torsion subgroup for which $\delta(E/K,1,1)=0$ and hence $\delta(E/K,1)$ is also 0.

\section{Chebotarev density theorem for wildly ramified extensions}

Let $L|K$ be a Galois extension of function fields with constant field $k$, and unramified away from a set of places $S$. Let $|S|=\sum_{\nu\in S}\deg(\nu)$ and let $c$ be the integer such that the algebraic closure of $k$ in $L$ is a degree $c$ extension of $k$. Let $\pi_1(n,L/K)$ be the number of places of degree $n$ of $K$ which split completely in $L/K$. $\rho_{L/K}$ is an integer as defined in \cite{Se3} 1.2 and \cite{MuSc} 3 and which we will redefine and estimate thereafter. 

We will use the following version of Chebotarev density theorem for global function fields:

\begin{thm} (\cite{MuSc}, Theorem 2.)
If $c|n$, then
\[\left|\pi_1(n,L/K)-c\frac{1}{[L:K]}|V_K(n)|\right|\leq2\left((3g_K+(\rho_{L/K}+1)|S|)\frac{q^{n/2}}{n}+\frac{|S|}{2n}\right)+|S|.\]
Otherwise $\pi_1(n,L/K)=0$.
\end{thm}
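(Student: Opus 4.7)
The plan is to follow the classical L-function route to the Chebotarev problem, adapted to function fields and exploiting the rationality of zeta functions and the Riemann hypothesis of Weil. I would invoke the character orthogonality identity $\mathbf{1}_{\{1\}}(g)=\frac{1}{|G|}\sum_{\chi}\chi(1)\overline{\chi(g)}$ on $G=\Gal(L/K)$, with $\chi$ ranging over irreducible characters of $G$, and apply it to Frobenius classes at unramified degree-$n$ places; sweeping the places in $S$ into an $\Ordo(|S|)$ remainder gives
\[\pi_1(n,L/K)=\frac{1}{[L:K]}\sum_{\chi}\chi(1)\,S_\chi(n)+\Ordo(|S|),\]
with $S_\chi(n):=\sum_{\nu\notin S,\,\deg\nu=n}\chi(\mathrm{Frob}_\nu)$. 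The trivial character yields the main term, and the dichotomy on $c|n$ appears here: at a degree-$n$ place $\mathrm{Frob}_\nu$ projects to $\phi^n\in\Gal(k_L/k)\simeq\Z/c\Z$, which equals the identity exactly when $c|n$; otherwise no Frobenius can equal the identity in $G$ and $\pi_1(n,L/K)=0$ automatically.

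For each nontrivial irreducible $\chi$ I would bound $S_\chi(n)$ via the Artin L-function $L(u,\chi)$ with $u=q^{-s}$. Three facts from Weil's theory are the engine: (i) $L(u,\chi)$ is a polynomial in $u$; (ii) its inverse roots all satisfy $|\alpha_{i,\chi}|=q^{1/2}$; (iii) its degree is given by the conductor--discriminant formula
\[d_\chi=\chi(1)(2g_K-2)+\deg\mathfrak{f}(\chi),\]
where $\mathfrak{f}(\chi)$ is the Artin conductor supported on $S$. The logarithmic-derivative expansion $u\frac{d}{du}\log L(u,\chi)=\sum_n a_n(\chi)u^n$ then yields $a_n(\chi)=-\sum_i\alpha_{i,\chi}^n$, and a standard M\"obius unravelling over prime-power places shows $a_n(\chi)=n\,S_\chi(n)+\Ordo(q^{n/2})$. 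Together these give $|S_\chi(n)|\leq d_\chi\,q^{n/2}/n+(\text{minor})$, precisely the shape demanded by the theorem.

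Summing these estimates over nontrivial characters weighted by $\chi(1)/[L:K]$, and using $\sum_{\chi}\chi(1)^2=[L:K]$, the $(2g_K-2)$-part of the conductor--discriminant formula collects to a term of order $g_K\,q^{n/2}/n$ once one incorporates the trivial-character error $|V_K(n)|=q^n/n+\Ordo(g_K\,q^{n/2}/n)$. The local conductor at $\nu\in S$ splits as a tame part of size at most $\chi(1)$ and a Swan (wild) part $\mathrm{sw}_\nu(\chi)$, the latter uniformly controlled per character by $\rho_{L/K}\cdot\chi(1)$ through the very definition of $\rho_{L/K}$ in terms of upper-numbering ramification breaks. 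Summation in $\chi$ then delivers the factor $(\rho_{L/K}+1)|S|$ of the theorem, while the residual additive $|S|$ and $|S|/(2n)$ absorb the ramified Euler factors and the contributions of higher Frobenius powers in the explicit formula.

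The main obstacle is the uniform control of wild ramification encoded by $\rho_{L/K}$. In the tame case (for instance when $p\nmid[L:K]$) all Swan terms vanish and one takes $\rho_{L/K}=0$, yielding a clean Serre-style bound. In the wildly ramified setting the Swan conductor is a delicate characteristic-$p$ invariant not controlled by $[L:K]$ alone, and the usefulness of the Chebotarev bound above rests entirely on absorbing all wild ramification into a single parameter $\rho_{L/K}$ which can be estimated independently of $[L:K]$. This last step is what the paper's Section 2 achieves for the division field family $K(E[m])/K$; in the theorem itself $\rho_{L/K}$ is taken as a black-box input.
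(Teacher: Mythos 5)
This theorem is not proved in the paper at all: it is quoted verbatim from Murty--Scherk (the cited reference), whose definition of $\rho_{L/K}$ the paper recalls and then estimates for the division-field family in Proposition~\ref{Chebotarev}. So there is no ``paper's own proof'' to compare against; your outline should therefore be measured against the standard $L$-function proof, which is indeed the route Murty--Scherk take. Your sketch has the right skeleton (character orthogonality, Weil's Riemann hypothesis for the numerator polynomials, conductor--discriminant formula, Swan/Artin conductors controlled by $\rho$).

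There is, however, one genuine gap in the sketch. You attribute the main term to the trivial character alone, which would produce $\frac{1}{[L:K]}|V_K(n)|$, not the stated $\frac{c}{[L:K]}|V_K(n)|$. The correct decomposition splits the irreducible characters of $G=\Gal(L/K)$ according to whether they are trivial on the geometric subgroup $\Gal(L/Kk_L)$. The $c$ characters that factor through $\Gal(k_L/k)\simeq\Z/c\Z$ have $L$-functions with a pole (they are essentially twists of $\zeta_K$), so they are \emph{not} degree-$d_\chi$ polynomials as asserted in your step (i), and each contributes $\psi(\phi^n)\,q^n/n$ to the explicit formula; summing over these $c$ characters gives $c\cdot q^n/n$ precisely when $c\mid n$, which is the source of both the factor $c$ in the main term and the vanishing of $\pi_1$ when $c\nmid n$. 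The Weil bound with the conductor--discriminant accounting applies only to the remaining, ``geometric'' characters. A secondary remark: the paper's $\rho_{L/K}$ is defined via the valuation of the different, not directly via upper-numbering breaks; your appeal to upper-numbering breaks to bound the Swan conductor by $\rho_{L/K}\cdot\chi(1)$ is in the right spirit, but one should verify that the different-based definition used here dominates the largest upper break (this is the content of the identity $\val_L(\mathcal{D}_{L/K})=\sum_{i\geq0}(|G_i|-1)$ plus Herbrand's formula), rather than simply invoking the upper-numbering definition.
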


Recall the definition of $\rho_{L/K}$:

By the abuse of notation let first $K$ denote a local field, with ring of integers $o_K$, maximal ideal $m_K\lhd o_K$ and standard valuation $\mathrm{val}_K:K\to\mathbb{Z}\cup\{\infty\}$.

Let $L|K$ be a finite, totally ramified extension with ramification index $e_{L/K}$, different ideal $\mathcal{D}_{L/K}\lhd o_L$ and let us denote by $\val_{L}(\mathcal{D}_{L/K})$ the expontent of $m_L$ in $\mathcal{D}_{L/K}$: the integer $n$ for which $\mathcal{D}_{L/K}=m_L^n$.

We then have $p\nmid e_{L/K}\iff \val_L(\mathcal{D}_{L/K})=e_{L/K}-1\iff L|K$ is at most tamely ramified. (\cite{Na}, Theorem 4.8)

Thus there exists an integer $0\leq j<e_{L/K}$ such that $\val_L(\mathcal{D}_{L/K})\equiv -j-1~(\mathrm{mod~} e_{L/K})$. Then $j=0 \iff L|K$ is at most tamely ramified. Finally let

\[\rho_{L/K}=\left\{\begin{array}{ll}0, & \mathrm{~if~} L/K \mathrm{~is~at~most~tamely~ramified,} \\ \frac{\val_L(\mathcal{D}_{L/K})-(e_{L/K}-j-1)}{e_{L/K}}, & \mathrm{~if~} L/K \mathrm{~is~wildly~ramified,}\end{array}\right.\]

Notice that this is really an integer and $\rho_{L/K}=\left\lceil\frac{\val_L(\mathcal{D}_{L/K})+1}{e_{L/K}}\right\rceil-1$, where $\lceil\cdot\rceil$ is the ceiling function. Remark that if we would have Hensel's bound $\val_L(D_{L/K})\leq e_{L/K}-1+\val_p(e_{L/K})$ (which fails in the function field case), then we would get $\rho_{L/K}=1$ if $L|K$ is wildly ramified and 0 otherwise.

Also note that if $L=K(\alpha)$ with a separable Eisenstein polynomial $f$ satisfying $f(\alpha)=0$, then $e_{L/K}=\deg(f)$ and $\val_L(\mathcal{D}_{L/K})=\val_L(f'(\alpha))$.

Now, return to the original situation, that is, $K$ denotes a global function field, and $L/K$ is a Galois extension with constant field $k$. Then let $\rho_{L/K}=\mathrm{max}_\nu(\rho_{L_\nu/K_\nu})$.

\begin{lem}\label{rhobecsles}
Let $M|L|K$ be a tower of totally ramified Galois extensions with constant field $k$. We then have
\begin{enumerate}
\item $\rho_{L/K}\leq \rho_{M/K}\leq \rho_{L/K}+\left\lceil\frac{\rho_{M/L}}{e_{L/K}}\right\rceil$.
\item $\rho_{M/K}=\rho_{L/K}$ if $M|L$ is at most tamely ramified.
\end{enumerate}
\end{lem}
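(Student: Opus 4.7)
The engine of both claims is the tower formula for different ideals,
$$\val_M(\mathcal{D}_{M/K}) \;=\; \val_M(\mathcal{D}_{M/L}) + e_{M/L}\cdot \val_L(\mathcal{D}_{L/K}),$$
together with the multiplicativity $e_{M/K}=e_{M/L}\,e_{L/K}$. Since $M/K$, $L/K$, $M/L$ are totally ramified at the unique places involved, we may apply the formula from the definition of $\rho$ directly; note that the constant field $k$ is preserved so no residue-field extension interferes.

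Write $d_{F/E}:=\val_F(\mathcal{D}_{F/E})$. By construction of $\rho$, there exist integers $0\leq r\leq e_{L/K}-1$ and $0\leq s\leq e_{M/L}-1$ with $d_{L/K}=\rho_{L/K}\,e_{L/K}+r$ and $d_{M/L}=\rho_{M/L}\,e_{M/L}+s$. Plugging into the tower formula and dividing by $e_{M/K}$ gives the key identity
$$\frac{d_{M/K}+1}{e_{M/K}} \;=\; \rho_{L/K} + \frac{\rho_{M/L}}{e_{L/K}} + \frac{r}{e_{L/K}} + \frac{s+1}{e_{M/K}}.$$
Taking $\lceil\cdot\rceil$ and subtracting $1$ gives $\rho_{M/K}$, so both statements reduce to estimating the final two ``correction'' terms $\frac{r}{e_{L/K}}+\frac{s+1}{e_{M/K}}$.

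For part (1), the correction is strictly positive, hence the ceiling is at least $\rho_{L/K}+1$, giving $\rho_{L/K}\leq\rho_{M/K}$. In the other direction, using $r\leq e_{L/K}-1$ and $s+1\leq e_{M/L}$, the correction is at most $(1-1/e_{L/K})+1/e_{L/K}=1$; combined with $\rho_{M/L}/e_{L/K}\leq \lceil \rho_{M/L}/e_{L/K}\rceil$ this yields $\rho_{M/K}+1\leq \rho_{L/K}+\lceil \rho_{M/L}/e_{L/K}\rceil+1$, i.e.\ the upper bound.

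For part (2), tame ramification of $M/L$ forces $\rho_{M/L}=0$ and $d_{M/L}=e_{M/L}-1$, i.e.\ $s=e_{M/L}-1$. The identity collapses to
$$\frac{d_{M/K}+1}{e_{M/K}}=\rho_{L/K}+\frac{r+1}{e_{L/K}},$$
and since $1\leq r+1\leq e_{L/K}$ the ceiling equals $\rho_{L/K}+1$, hence $\rho_{M/K}=\rho_{L/K}$. The only mildly delicate point is the sharp bookkeeping in (1), namely that the inequality $\frac{r}{e_{L/K}}+\frac{s+1}{e_{M/K}}\leq 1$ is tight, but this is immediate from the chosen ranges of $r$ and $s$; no deeper local argument is required beyond the tower formula for differents.
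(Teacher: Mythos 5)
Your proof is correct and follows essentially the same route as the paper: both hinge on the local tower formula $\val_M(\mathcal{D}_{M/K})=\val_M(\mathcal{D}_{M/L})+e_{M/L}\val_L(\mathcal{D}_{L/K})$ together with elementary ceiling estimates. The only stylistic difference is that you make the Euclidean divisions $d_{L/K}=\rho_{L/K}e_{L/K}+r$, $d_{M/L}=\rho_{M/L}e_{M/L}+s$ explicit, while the paper manipulates the ceiling expression directly (using $\lceil a+b\rceil\leq\lceil a\rceil+\lceil b\rceil$ and the observation that the extra summand vanishes precisely when $M/L$ is tame); the underlying bookkeeping is identical.
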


\begin{proof}
Clearly it suffices to verify the statements locally. Let us once again denote by $K=K_\nu$, $L=L_\nu$ and $M=M_\nu$. Then
\begin{eqnarray*}
\rho_{M/K}=\left\lceil\frac{\val_M(\mathcal{D}_{M/K})+1}{e_{M/K}}\right\rceil-1=\left\lceil\frac{\val_M(\mathcal{D}_{M/L})+\val_M(\mathcal{D}_{L/K})+1}{e_{M/L}\cdot e_{L/K}}\right\rceil-1= \\
=\left\lceil\frac{1}{e_{L/K}}\cdot\frac{\val_M(\mathcal{D}_{M/L})+1}{e_{M/L}}+\frac{\val_L(\mathcal{D}_{L/K})+1}{e_{L/K}}-\frac{1}{e_{L/K}}\right\rceil-1.\end{eqnarray*}

Using the trivial inequality $\lceil a+b\rceil\leq\lceil a\rceil+\lceil b\rceil$ we get the upper bound in 1. The lower bound is also clear since $(\val_M(\mathcal{D}_{M/L})+1)/e_{M/L}\geq 1$, hence 

\[\frac{1}{e_{L/K}}\left(\frac{\val_M(\mathcal{D}_{M/L})+1}{e_{M/L}}-1\right)\geq 0.\]

Moreover equality holds if and only if $M|L$ is at most tamely ramified, which proves 2.
\end{proof}

\begin{pro}\label{Chebotarev}
Let $E/K$ be a non-isotrivial elliptic curve over $K$ and $m,n\in\mathbb{N}$ such that $(m,p)=1$ and $\mathrm{ord}_m(q)|n$. Then there exists $\rho$ independent from $n$ such that
\[\left|\pi_1(n,K(E[m])/K)-\frac{\mathrm{ord}_m(q)\cdot q^n}{[K(E[m]):K]}\right|\leq2\left((3g_K+(\rho+1)|\overline{V}_{E/K}|)\frac{q^{n/2}}{n}+\frac{|\overline{V}_{E/K}|}{2n}\right)+|\overline{V}_{E/K}|.\]
\end{pro}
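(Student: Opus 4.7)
The plan is to apply Theorem~2 to $L=K(E[m])/K$ and then control its three inputs (the constant $c$, the ramified locus $S$, and the ramification invariant $\rho_{L/K}$) in terms of $E$ and $K$ alone. The constant is $c=\ord_m(q)$, as already recorded in the introduction, since the algebraic closure of $k$ in $K(E[m])$ has degree $\ord_m(q)$ over $k$; the divisibility hypothesis $\ord_m(q)\mid n$ puts us in the nonzero regime of Theorem~2. For the ramified locus, the N\'eron--Ogg--Shafarevich criterion (applicable since $(m,p)=1$) says $K(E[m])/K$ is unramified at every place of good reduction of $E$, so $S\subseteq\overline{V}_{E/K}$ and hence $|S|\leq|\overline{V}_{E/K}|$, matching the form of the right-hand side of the proposition.

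The real content, and the main obstacle, is to bound $\rho=\rho_{K(E[m])/K}$ independently of $n$ (and, substantively, of $m$). The plan is to combine Lemma~\ref{rhobecsles} with a potentially semistable reduction argument. Choose a finite extension $K'/K$ depending only on $E$ such that $E/K'$ acquires semistable reduction at every place: at good-reduction places $K'(E[m])/K'$ is unramified, while at multiplicative-reduction places the local extension is contained in the Tate uniformisation tower $K'_\nu(\zeta_m,q_\nu^{1/m})$ and is at most tamely ramified since $(m,p)=1$. Localising at a place $\nu$ of $K$ and passing to the totally ramified subtowers where Lemma~\ref{rhobecsles} applies, part~(2) of that lemma applied to $K_\nu\subseteq K'_\nu\subseteq K'_\nu(E[m])$ yields $\rho_{K'_\nu(E[m])/K_\nu}=\rho_{K'_\nu/K_\nu}$, while the lower bound in part~(1) applied to the parallel tower $K_\nu\subseteq K_\nu(E[m])\subseteq K'_\nu(E[m])$ gives
\[\rho_{K_\nu(E[m])/K_\nu}\leq\rho_{K'_\nu(E[m])/K_\nu}=\rho_{K'_\nu/K_\nu}.\]
Taking the maximum over $\nu$ produces $\rho\leq\rho_{K'/K}$, a constant depending only on $E$ and $K$.

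Feeding these three inputs into Theorem~2 yields the stated inequality up to one bookkeeping step: Theorem~2 produces a main term proportional to $|V_K(n)|$ rather than $q^n$, and the discrepancy is controlled by the Weil estimate for the zeta function of $K/k$ (giving $\bigl||V_K(n)|-q^n/n\bigr|\ll g_K\,q^{n/2}/n$, or the appropriate sum-of-degrees analogue), which is absorbed into the existing $q^{n/2}/n$ term in the error. The principal difficulty is thus the uniform $\rho$-bound above, specifically checking that the totally-ramified-with-constant-field-$k$ hypothesis of Lemma~\ref{rhobecsles} can be arranged by restricting to appropriate subtowers, and that the wild ramification really is confined to the bottom layer $K\subseteq K'$ of the two-step tower; the rest of the argument is then a straightforward combination of Theorem~2 and the estimates just collected.
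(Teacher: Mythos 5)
Your proof is essentially the paper's proof, and in fact spells out some details the paper leaves implicit. The paper also passes to a finite extension $K'/K$ over which $E$ has everywhere good or split multiplicative reduction, observes that $K'(E[m])/K'$ is then at most tamely ramified (so Lemma~\ref{rhobecsles}(2) applied to $K\subseteq K'\subseteq K'(E[m])$ gives $\rho_{K'(E[m])/K}=\rho_{K'/K}$), and uses the monotonicity in Lemma~\ref{rhobecsles}(1) on $K\subseteq K(E[m])\subseteq K'(E[m])$ to conclude $\rho_{K(E[m])/K}\leq\rho_{K'/K}$, which is independent of $m$ and $n$. Your explicit localisation at each $\nu$ and your remark about the Tate uniformisation tower at multiplicative places are just unpacking the same two lemma applications, and your identification of $c=\ord_m(q)$, $S\subseteq\overline{V}_{E/K}$, and the $|V_K(n)|$-vs-$q^n/n$ bookkeeping matches what the paper does (or takes for granted). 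No new route, and no gap.
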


\begin{proof}
We use the fact that there exits a finite extension $K'|K$ such that $E/K'$ has either good or split multiplicative reduction over $K'$ (\cite{Si1} Proposition VII.5.4.), hence for all $m$ we have that $K(E[m])/K$ is at most tamely ramified (\cite{Si2} Theorem 10.2). Then by Lemma \ref{rhobecsles} we get that $\rho_{K(E[m])/K}\leq\rho_{K'(E[m])/K}=\rho_{K'/K}=:\rho$, which does not depend on $m$.
\end{proof}

Theorem \ref{aszimptotika} follows from this by standard arguments.

\begin{rems}
\begin{enumerate}
\item Let $k=\mathbb{F}_2$, $K=k(j)$ and $E/K$ be the elliptic curve with $j$-invariant $j$ defined by the equation $y^2+xy+x^3+j^{-1}=0$. Here $K(E)/K$ is wildly ramified at $\infty$. However if we consider the Deuring normal form $E': y^2+txy+y+x^3=0$, some computation with Tate's algorithm show that there is no more wild ramification. The $j$-invariant of $E'$ is $t^{12}/(t^3+1)$. Hence we can set $L=K(t)=k(t)$ with $t$ satisfying $f(t)=t^{12}+jt^3+j=0$ - doing that we adjoin the coordinates of 2 points of the 3-torsion. Here we have $e_{L/K}=12$ and \break $\val_L(\mathcal{D}_{L/K})=\val_L(f'(t))=14$ since $f$ is an Eisenstein polynomial. Thus \break $\rho_{L/K}=\left\lceil\frac{\val_L(\mathcal{D}_{L/K})+1}{e_{L/K}}\right\rceil-1=1$.

We need one more field extension, since $E$ and $E'$ are not isomorphic over $L$, only over $M=L(s)$ with $s^2+ts+t=0$. Here we have $e_{M/L}=2$ and $\val_M(\mathcal{D}_{M/L})=4$. Hence as in the proof of \ref{rhobecsles}
\[\rho_{M/K}=\left\lceil\frac{\val_M(\mathcal{D}_{M/L})+e_{M/L}\cdot\val_L(\mathcal{D}_{L/K})+1}{e_{M/L}\cdot e_{L/K}}\right\rceil-1=\left\lceil\frac{33}{24}\right\rceil-1=1.\]

Hence in this case we have
\[\left|\pi_1(n,K(E[m])/K)-\frac{2^n}{|\mathrm{SL}_2(\mathbb{Z}_m)|}\right|\leq8\cdot\frac{2^{n/2}+1}{n}+2.\]

For $a\in\mathbb{F}_{2^n}^*$ let $E(a)/\mathbb{F}_{2^n}:y^2+xy+x^3+a=0$ and let $E(0)/\mathbb{F}_{2^n}:y^2+y+x^3=0$. Denote $f(n)=\#(a\in\mathbb{F}_{2^n}|E(a)\mathrm{~is~cyclic})=\sum_{d|n}d\#(\nu\in V_{E/K}(d)|d_\nu=1)$. Here only the term $d=n$ is relevant, thus $f(n)\simeq g(n)+O(2^{n/2})$, where $g(n)=2^n\sum_{\stackrel{m\leq 2^{n/2}+1}{m|2^n-1}}\frac{1}{|\mathrm{SL}_2(\mathbb{Z}/m\mathbb{Z})|}$. We computed some values of $f(n)$ and $g(n)$, and the following tables illustrate the result.
\[\begin{array}{ccc}
\begin{array}{|c|c|c|c|}
\hline
n&f(n)&g(n)&f(n)-g(n)\\
\hline
1&2&2&0\\
\hline
2&3&3.83&-0.83\\
\hline
3&8&8&0\\
\hline
4&15&15.2&-0.2\\
\hline
5&32&32&0\\
\hline
6&60&61.14&-1.14\\
\hline
7&128&128&0\\
\hline
8&246&243.22&2.78\\
\hline\end{array} & ~ &
\begin{array}{|c|c|c|c|}
\hline
n&f(n)&g(n)&f(n)-g(n)\\
\hline
9&512&510.48&1.52\\
\hline
10&977&980.55&-3.55\\
\hline
11&2047&2047.83&-0.83\\
\hline
12&3873&3878.98&-5.98\\
\hline
13&8192&8192&0\\
\hline
14&15670&15701.13&-31.13\\
\hline
15&32673&32669.37&3.63\\
\hline
16&62294&62265.91&28.09\\
\hline\end{array}
\end{array}\]

Note that if $2^n-1>3$ is a Mersenne prime, then our esimate is sharp: there is no nontrivial $m$ such that $m|2^n-1$, thus $\delta(E/K,1,n)=1$ and also $E_\nu(k_\nu)$ is cyclic for all $\nu\in V_{E/K}(n)$.

\item Let $k=\mathbb{F}_3$ and $K=k(j)$ and $E:y^2+xy-x^3+j^{-1}=0$ the elliptic curve with $j$-invariant $j$ over $K$. We have wild ramification at $\infty$ again.

Now set $L=K(\mu)$ with $\mu^{12}-j((\mu^4-1))^2=0$. $E/L$ is isomorphic with \break $E': y^2=x(x-1)(x-\mu^2+1)$ - this is a bit varied version of the Legendre normal form composed with a quadratic extension. Here is no more wild ramification and again we have $\rho_{L/K}=1$.

The Chebotarev density theorem in this case gives:
\[\left|\pi_1(n,K(E[m])/K)-\frac{3^n}{|\mathrm{SL}_2(\mathbb{Z}_m)|}\right|\leq8\cdot\frac{3^{n/2}+1}{n}+2.\]
\end{enumerate}
\end{rems}

\section{Dirichlet density of places with cyclic reduction}

Let $n\in\mathbb{N}\setminus\{0\}$. Recall that
\[\delta(E/K,1,n)=\sum_{\stackrel{m\geq 1}{m|q^n-1}}\frac{\mu(m)\mathrm{ord}_m(q)}{|K(E[m]):K|}.\]

It is mentioned in \cite{CT} Remark 17, that if there exists a prime $\ell\neq p$ such that $K(E[\ell])=K$ (or equivalently the torsion subgroup of $E(K)$ is not cyclic), then for all $n$ we have $\delta(E/K,1,n)=0$. Moreover if $K=\mathbb{F}_q(j_E)$, then the converse holds.

We show that it is not true in general:

\begin{pro}
There exists an elliptic curve $E/K$ with cyclic torsion subgroup such that for infinitely many $n\in\mathbb{N}$ we have $\delta(E/K,1,n)=0$.
\end{pro}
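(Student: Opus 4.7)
The plan is to exhibit a non-isotrivial elliptic curve $E$ over $K=\mathbb{F}_2(t)$ with cyclic torsion subgroup and $[K(E[3]):K]=2$, and then verify that this single structural condition at $\ell=3$ already forces $\delta(E/K,1,n)=0$ for every even~$n$, yielding the required infinite set.

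For the construction, note that $\mathrm{ord}_3(2)=2$, so the Weil pairing forces $K(\zeta_3)=K\cdot\mathbb{F}_4\subseteq K(E[3])$ and hence $[K(E[3]):K]\geq 2$. Equality means the mod-$3$ Galois image of $E$ is cyclic of order~$2$; since the determinant is $2\equiv -1\pmod 3$, the generator is conjugate to $\mathrm{diag}(1,-1)$. Thus $E(K)[3]\cong\mathbb{Z}/3\mathbb{Z}$ and $E$ carries two independent Galois-stable cyclic subgroups of order~$3$. Such elliptic curves are parametrized by a rational modular curve (a quotient of $X_0(9)$) admitting non-isotrivial points over $\mathbb{F}_2(t)$; I would pick such a point generic enough at the primes $\ell'\neq 3$ so that $E(K)_{\mathrm{tors}}=\mathbb{Z}/3\mathbb{Z}$ and so that the constant field of $K(E[\ell'])$ equals the Weil-pairing minimum $\mathbb{F}_{2^{\mathrm{ord}_{\ell'}(2)}}$.

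For the vanishing, fix $n$ even. Since $3\mid 2^n-1$, the squarefree divisors of $2^n-1$ pair up as $m'\leftrightarrow 3m'$ with $\gcd(m',3)=1$, and each pair contributes
\[\mu(m')\left(\frac{\mathrm{ord}_{m'}(2)}{[K(E[m']):K]}-\frac{\mathrm{ord}_{3m'}(2)}{[K(E[3m']):K]}\right)\]
to $\delta(E/K,1,n)$. I split on the parity of $\mathrm{ord}_{m'}(2)$. If $\mathrm{ord}_{m'}(2)$ is even, then $K(\zeta_{m'})\subseteq K(E[m'])$ has constant field containing $\mathbb{F}_4=K(E[3])$, so $K(E[3m'])=K(E[m'])$ and $\mathrm{ord}_{3m'}(2)=\mathrm{ord}_{m'}(2)$; the bracket vanishes. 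If $\mathrm{ord}_{m'}(2)$ is odd, the constant field of $K(E[m'])$ equals $\mathbb{F}_{2^{\mathrm{ord}_{m'}(2)}}$ by the genericity choice and does not contain $\mathbb{F}_4$; hence $K(E[3])$ and $K(E[m'])$ are linearly disjoint over $K$, giving $[K(E[3m']):K]=2[K(E[m']):K]$ and $\mathrm{ord}_{3m'}(2)=2\mathrm{ord}_{m'}(2)$, so the bracket vanishes again. Summing, $\delta(E/K,1,n)=0$ for every even~$n$.

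The main obstacle is producing a genuine non-isotrivial $E/\mathbb{F}_2(t)$ of the required type: one needs an explicit modular-curve construction, for instance starting from the $X_1(3)$ Tate normal form and imposing a second rational $3$-isogeny as an additional polynomial constraint on the parameter. If the genericity at primes $\ell'\neq 3$ only holds outside a finite exceptional set $S$, one restricts to the still-infinite set of even $n$ for which $2^n-1$ is coprime to $\prod_{\ell'\in S}\ell'$; on this set the pairing argument is unaffected and $\delta(E/K,1,n)=0$, which suffices.
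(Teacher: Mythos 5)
Your combinatorial core — pairing the terms $m' \leftrightarrow 3m'$ in the M\"obius sum and showing each bracket vanishes by splitting on the parity of $\mathrm{ord}_{m'}(q)$ — is exactly the paper's argument, which is carried out over $\mathbb{F}_5(t)$ with $\ell=3$ and $\mathrm{ord}_3(5)=2$. Your case analysis and the linear-disjointness deductions are correct. One simplification you missed: the ``genericity'' you invoke in the odd-order case is superfluous. For any non-isotrivial $E/K$ the constant field of $K(E[m'])$ equals $\mathbb{F}_{q^{\mathrm{ord}_{m'}(q)}}$ exactly (the determinant of the mod-$m'$ representation is the cyclotomic character and the geometric monodromy lies in $\mathrm{SL}_2$; this is the fact $c_m=\mathrm{ord}_m(q)$ stated in the paper). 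Thus there is no finite exceptional set, and no need to further restrict the set of even $n$.

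The genuine gap is the construction. The proposition is an existence claim, and you do not produce a curve: you assert that curves with mod-$3$ image $\{\mathrm{diag}(1,\pm1)\}$ and cyclic torsion ``are parametrized by a rational modular curve (a quotient of $X_0(9)$) admitting non-isotrivial points over $\mathbb{F}_2(t)$'' and that you ``would pick such a point generic enough,'' while explicitly flagging this as ``the main obstacle.'' The identification of the modular curve, its rationality over $\mathbb{F}_2$, representability (one must check rigidity; here $-1$ moves the marked $3$-torsion point, so this is fine away from $j=0$), the absence of extra torsion at primes $\ell'\neq3$ over the chosen model, and the verification that the constant field is still $\mathbb{F}_2$ all remain to be checked. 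The paper discharges the corresponding burden over $\mathbb{F}_5(t)$ by writing down an explicit Weierstrass equation, factoring $\psi_3$, exhibiting the full $3$-torsion over $K(\sqrt{2})$ so that $[K(E[3]):K]=2$, and bounding $E(K)_{\mathrm{tors}}$ by reduction at a good place; it also notes (in a Remark) that for $q=2$ one needs $K=\mathbb{F}_2(t)(u)$ rather than $\mathbb{F}_2(t)$ itself, which is a hint that the base field $\mathbb{F}_2(t)$ you posit may not actually carry such a curve — at minimum this requires verification. Until the curve is exhibited, the proof is incomplete.
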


\begin{proof}
Let $K=\mathbb{F}_5(t)$. We construct a curve $E$ such that the extension $K(E[3])|K$ is algebraic and of degree 2. Then since $\mathrm{ord}_3(5)=2$, and if $2|n$ we have
\[\delta(E/K,1,n)=\sum_{\stackrel{m\geq 1}{m|5^n-1}}\frac{\mu(m)\mathrm{ord}_m(5)}{|K(E[m]):K|}=\sum_{\stackrel{3\nmid m}{1\leq m|5^n-1}}\left(\frac{\mu(m)\mathrm{ord}_m(5)}{|K(E[m]):K|}-\frac{\mu(m)\mathrm{ord}_{3m}(5)}{|K(E[3m]):K|}\right).\]
Then either we have $2|\mathrm{ord}_m(5)=\mathrm{ord}_{3m}(5)$ and $K(E[3m])=K(E[3])K(E[m])=K(E[m])$, since $K(E[m])$ contains the cyclotomic field $K(\zeta_m)\geq K(\zeta_3)=K(E[3])$. Or $2\nmid\mathrm{ord}_m(5)$ and hence $\mathrm{ord}_{3m}(5)=2\cdot\mathrm{ord}_m(5)$, moreover $K(E[3])\nleq K(E[m])$ since $(\mathrm{ord}_m(q),\mathrm{ord}_3(q))=1$, consequently $|K(E[3m]):K(E[m])|=2$. Thus all terms on the right-hand side are 0, and $\delta(E/K,1,n)=0$.

To realize an explicit example set $p(t)=t^3-t^2+2t$, $q(t)=2t^6+t^5+t-1$ and \break $E: y^2=x^3+p(t)x+q(t)$ over $K$. Then $\Delta_E=-16(4p(t)^3+27q(t)^2)\neq 0$ and $j_E=1728\cdot\frac{4p(t)^3}{\Delta_E}\notin\mathbb{F}_q$, thus $E$ is non-isotrivial.

Moreover $E_0:y^2=x^3-1$ has 6 points over $\mathbb{F}_5$, hence the torsion subgroup of $E(K)$ is cyclic.

The third division polynomial is 
\[\psi_3(x)=3(x^4+2p(t)x^2-q(t)x-2p(t)^2)=3(x^2-(1+2t^2)x-(t^4-2t^3-t-1))(x+2t^2)(x+1),\]
where on the right-hand side the first term is irreducible over $\mathbb{F}_5[t]$. However if we \break denote $L=K(\sqrt{2})$, the following points are in $E(L)[3]$: $(-1,\pm\sqrt{2}(t^3-t^2+2t-2))$, \break $(-2t^2,\pm2(t^3-2t^2+2t+1))$, hence $E(L)[3]$ is the whole 3-torsion. Thus $K(E[3])=L$ and $|K(E[3]):K|=2$, indeed.
\end{proof}

\begin{rems}
\begin{enumerate}
\item For $n=2$ it is easy to verify, that there is no place $\nu\in V_{E/K}(2)$ such that $E_\nu$'s group is cyclic. Thus $\#(\nu\in V_{E/K}(n)|d_\nu=1)=0\not\Longrightarrow \exists \ell\neq p: K(E[\ell])=K$.
\item The same can be carried out for $q\neq 5$, $q\equiv2~(\mathrm{mod}~3)$. For example if $q=2$ we can choose $K=\mathbb{F}_2(t)(u)$, where $u^2+(t^3+1)u+(t^{12}+1)=0$ and $E/K:y^2+xy=x^3+(t^{12}+t^9+t^6+t^3)$. If $q\not\equiv2~(\mathrm{mod}~3)$, then we shall use a different prime $\ell$ instead of 3 such that $\mathrm{ord}_\ell(q)>1$.
\end{enumerate}
\end{rems}

~\\

Now we shall turn to a slightly different question. Whether the same phenomenon can arise if we consider all places $\nu\in V_{E/K}$ at once. Recall that by the definition of Dirichlet density we have
\[\delta(E/K,1)=\lim_{s\to1+0}\frac{\sum_{\stackrel{\nu\in V_{E/K}}{d_\nu=1}}q^{-s\deg(\nu)}}{\sum_{\nu\in V_{E/K}}q^{-s\deg(\nu)}}.\]

Of course, if the torsion subgroup of $E(K)$ is not cyclic, then by definition $\delta(E/K,1)=0$. Our goal is to determine when $\delta(E/K,1)=0$ in general.

Recall that for all but finitely many primes $\ell$ we have $\mathrm{Gal}(K(E[\ell])/K\mathbb{F}_{q^{\mathrm{ord}_\ell(q)}})\simeq\mathrm{SL}_2(\mathbb{Z}_\ell)$
(\cite{Ig} Theorem 4, \cite{Se1}, \cite{CT} Theorem 6) Let $M(E/K)$ be the torsion conductor of $E/K$ - the product of the finitely many exceptional primes $\ell_i$ and $N(E/K)$ be the least common multiple of $\mathrm{ord}_{\ell_i}(q)$. Moreover if $m_1,m_2\in\mathbb{N}$ such that $(m_1,p)=(m_2,p)=(m_2,M)=1$ and $m_1$ is composed of primes dividing $M(E/K)$, then $K(E[m_1])\cap K(E[m_2])=K\mathbb{F}_{q^{(\mathrm{ord}_{m_1}(q),\mathrm{ord}_{m_2}(q))}}$. (\cite{CT} Corollary 8)

Now we are ready to prove Theorem \ref{suruseg}:

\begin{proof}
First assume that $\delta(E/K,1,1)>0$. Let $N=N(E/K)$ and $M=M(E/K)$. If $(n,N)=1$, then in the definition of $\delta(E/K,1,n)$ all $m|q^n-1$ can be written in the form $m=m_0m'$ with $(m_0,M)=1$ and $m'|q-1$. Note that $\mathrm{ord}_m(q)=\mathrm{ord}_{m_0}(q)$. Using the previously mentioned facts we get $|K(E[m]):K|=|K(E[m_0]):K|\cdot|K(E[m']):K|$ and we can proceed as in \cite{CT} Remark 17:
\begin{eqnarray*}
\delta(E/K,1,n)=\sum_{\stackrel{m_0|q^n-1}{(m_0,q-1)=1}}\left(\sum_{m'|q-1}\frac{\mu(m_0m')\mathrm{ord}_{m_0m'}(q)}{|K(E[m_0m']):K|}\right)=\\
=\sum_{\stackrel{m_0|q^n-1}{(m_0,q-1)=1}}\frac{\mu(m_0)}{|K(E[m_0]):K\mathbb{F}_q^{\mathrm{ord}_{m_0}(q)}|}\left(\sum_{m'|q-1}\frac{\mu(m')}{|K(E[m']):K|}\right)=\\
=\left(\sum_{\stackrel{m_0|q^n-1}{(m_0,q-1)=1}}\frac{\mu(m_0)}{|\mathrm{SL}_2(\mathbb{Z}/m_0\mathbb{Z})|}\right)\cdot\delta(E/K,1,1)>\delta(E/K,1,1)\prod_{\ell\nmid pM}\left(1-\frac{1}{\ell(\ell^2-1)}\right)=\varepsilon>0.
\end{eqnarray*}

Now from the definition of $\delta(E/K,1)$ we have
\begin{eqnarray*}
\delta(E/K,1)=\lim_{s\to1+0}\frac{\sum_{n>n_0}\sum_{\stackrel{\nu\in V_{E/K}(n)}{d_\nu=1}}q^{-sn}}{\sum_{n>n_0}|V_{E/K}(n)|q^{-sn}}\geq\frac{\sum_{\stackrel{n\equiv1~(\mathrm{mod}~N)}{n>n_0}}\#(\nu\in V_{E/K}(n)|d_\nu=1)q^{-sn}}{\sum_{\stackrel{n\equiv1~(\mathrm{mod}~N)}{n>n_0}}N|V_{E/K}(n)|q^{-sn}}\geq\\
\geq\frac{\sum_{\stackrel{n\equiv1~(\mathrm{mod}~N)}{n>n_0}}\varepsilon/2\cdot|V_{E/K}(n)|q^{-sn}}{\sum_{\stackrel{n\equiv1~(\mathrm{mod}~N)}{n>n_0}}N|V_{E/K}(n)|q^{-sn}}=\frac{\varepsilon}{2N}>0.
\end{eqnarray*}
Here we used the fact that $|\#(\nu\in V_{E/K}(n)|d_\nu=1)-\delta(E/K,1,n)\cdot|V_{E/K}(n)||<c(K,E)q^{n/2}<q^n/2$ for a fixed $c(K,E)>0$ and for $n>n_0(K,E)$ depending only on $K,E$ (cf \cite{CT} Theorem 1.1 and the asymptotic formula for $|V_{E/K}(n)|$).

~\\

To prove the converse statement assume that $\delta(E/K,1,1)=0$. We will show that for all $n\in\mathbb{N}$ we have $\delta(E/K,1,n)=0$. Then there exists $C>0$ such that $|V_{E/K}(n)|\geq Cq^n/n$ and by Theorem \ref{aszimptotika} we have some $c=c(K,E,1)$ such that $\#(\nu\in V_{E/K}(n)|d_\nu=1)\geq cq^{n/2+1}/n$. So by definition
\[\delta(E/K,1)=\lim_{s\to1+0}\frac{\sum_n\#(\nu\in V_{E/K}(n)|d_\nu=1)q^{-ns}}{\sum_n|V_{E/K}(n)|q^{-ns}}\leq\lim_{s\to1+0}\frac{\sum_ncq^{1-n(s-1/2)}/n}{\sum_nCq^{-n(s-1)}/n}=0,\]
and we are done.

We shall examine in detail when is $\delta(E/K,1,1)=0$.
\[\delta(E/K,1,1)=\sum_{m|q-1}\frac{\mu(m)}{|K(E[m]):K|}=\frac{1}{|K(E[q-1]):K|}\sum_{m|q-1}|K(E[q-1]):K(E[m])|.\]
Let $H=\Gal(K(E[q-1])/K)\leq\mathrm{SL}_2(\mathbb{Z}/(q-1)\mathbb{Z})$ and for primes $\ell|q-1$ denote $H_\ell=H\cap\Ker(\pi_\ell)$, where $\pi_\ell:\mathrm{SL}_2(\mathbb{Z}/(q-1)\mathbb{Z})\to\mathrm{SL}_2(\mathbb{Z}/\ell\mathbb{Z}))$ is the modulo $\ell$ reduction. So by the inclusion exclusion principle $\delta(E/K,1,1)=0$ if and only if we have $H=\bigcup_\ell H_\ell$.

Now let $n$ be an arbitrary integer. We can refine our computation of $\delta(E/K,1,n)$ as follows:
\[\delta(E/K,1,n)=\sum_{\stackrel{m_0|q^n-1}{(m_0,q-1)=1}}\sum_{m'|q-1}\frac{\mu(m_0m')ord_{m_0m'}(q)}{|K(E[m_0m']):K|}=\sum_{\stackrel{m_0|q^n-1}{(m_0,q-1)=1}}\frac{\mu(m_0)\mathrm{ord}_{m_0}(q)}{|K(E[m_0(q-1)]):K|}S^{(m_0)},\]
where $S^{(m_0)}=\sum_{m'|q-1}\mu(m')|K(E[m_0(q-1)]):K(E[m_0m'])|$.

We claim that $S^{(m_0)}$ is 0 for all $m_0$. For this let $H^{(m_0)}=\Gal(K(E[m_0(q-1)]):K(E[m_0]))$ and for primes $\ell|q-1$ let $H^{(m_0	)}_\ell=\Gal(K(E[m_0(q-1)])/K(E[m_0\ell]))$. As before, we have \break $S^{(m_0)}=0\iff H^{(m_0)}=\bigcup_\ell H^{(m_0)}_\ell$.

Let $\sigma\in H^{(m_0)}$. We can view it as an element of $\Gal(K(E[m(q-1)])/K$, and thus \break $\overline{\sigma}=\sigma|_{K(E[q-1])}\in H$. Since $\delta(E/K,1,1)=0$ there exists $\ell|q-1$ such that $\overline{\sigma}\in H_\ell$. But this means that $\sigma$ fixes $K(E[\ell])$. Moreover by definition $\sigma$ fixes $K(E[m_0])$, thus also \break $K(E[m_0\ell])=K(E[\ell])\cdot K(E[m_0])$. Hence $\sigma\in H^{(m_0)}_\ell=\Gal(K(E[m_0(q-1)])/K(E[m_0\ell]))$ and as we can do that for any $\sigma$, we got $H^{(m_0)}=\bigcup_\ell H^{(m_0)}_\ell$.
\end{proof}

\begin{cor}
The proof shows that if $q-1$ has at most 2 prime factors $\ell_1$ and $\ell_2$, then $\delta(E/K,1)>0$ if and only if $E(K)$ has cyclic torsion subgroup.
\end{cor}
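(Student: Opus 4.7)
The plan is to combine the criterion established inside the proof of Theorem \ref{suruseg} with the elementary group-theoretic fact that no group is the union of two proper subgroups. Writing $H=\Gal(K(E[q-1])/K)$ and $H_\ell=H\cap\Ker(\pi_\ell)$ for primes $\ell\mid q-1$, that proof showed $\delta(E/K,1,1)=0$ if and only if $H=\bigcup_{\ell\mid q-1} H_\ell$; by Theorem \ref{suruseg} this is in turn equivalent to $\delta(E/K,1)=0$. The ``only if'' direction of the corollary is immediate, since non-cyclic torsion of $E(K)$ forces $\delta(E/K,1)=0$ by definition.

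For the converse, I would assume $E(K)$ has cyclic torsion subgroup and split according to the number $r$ of prime divisors of $q-1$. If $r=0$, i.e.\ $q=2$, the sum defining $\delta(E/K,1,1)$ has only its $m=1$ term and equals $1$. If $r=1$, with $q-1$ divisible only by $\ell$, the equality $H_\ell=H$ would amount to $K(E[\ell])=K$; but $\ell\mid q-1$ implies $\ell\neq p$, and cyclicity of the torsion excludes precisely this, so $H_\ell\subsetneq H$ and $\delta(E/K,1,1)>0$. If $r=2$, with prime divisors $\ell_1,\ell_2$, the same reasoning applied to each prime separately forces both $H_{\ell_1}$ and $H_{\ell_2}$ to be proper subgroups of $H$; the decisive step is then to invoke the classical observation that no group can be the union of two proper subgroups, yielding $H\neq H_{\ell_1}\cup H_{\ell_2}$ and hence $\delta(E/K,1)>0$.

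I do not expect any serious obstacle: all the substantive work has been done in the proof of Theorem \ref{suruseg}, and the single new ingredient — the two-subgroup covering fact — is entirely elementary. The hypothesis on the number of prime factors of $q-1$ enters exactly at this covering step and cannot be dropped in general: once three or more proper subgroups are allowed in the union the obstruction disappears, and this is precisely the slack that the construction promised at the end of Section~3 is expected to exploit in order to realise $\delta(E/K,1)=0$ with cyclic torsion.
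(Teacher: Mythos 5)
Your proposal is correct and follows essentially the same route as the paper: reduce to the criterion $\delta(E/K,1,1)=0 \iff H=\bigcup_\ell H_\ell$ established in the proof of Theorem~\ref{suruseg}, observe that cyclic torsion forces each $H_\ell$ to be a proper subgroup, and then invoke the elementary fact that a group cannot be the union of two proper subgroups. The paper's own proof is just this last sentence; your explicit treatment of the cases $r=0$ and $r=1$ is a harmless elaboration of what the paper leaves implicit.
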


\begin{proof}
In this case $H\neq H_{\ell_1}\cup H_{\ell_2}$ - the union of two proper subgroup can not be the whole group.
\end{proof}

By the first glimpse one would expect that $\delta(E/K,1,1)=0$ if and only if the torsion subgroup of $E(K)$ is not cyclic. This is not true, in the following we construct counterexamples.

\begin{pro}
If $q-1$ has at least 3 distinct prime divisors, there exists an elliptic curve $E/K$ with cyclic torsion subgroup for which $\delta(E/K,1)=0$.
\end{pro}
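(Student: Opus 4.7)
The plan is to apply Theorem~\ref{suruseg}, reducing the task to constructing a non-isotrivial $E/K$ with cyclic torsion subgroup satisfying $\delta(E/K,1,1)=0$. By the analysis in the proof of Theorem~\ref{suruseg}, this amounts to requiring that $H=\Gal(K(E[q-1])/K)$ be the union of its subgroups $H_\ell=H\cap\Ker(\pi_\ell)$ for primes $\ell\mid q-1$, each of which must be proper (the properness encoding cyclic torsion). The key observation is that while no group is the union of two proper subgroups---this is exactly what forces the positive density conclusion in the preceding corollary---the Klein four-group $\mathbb{Z}/2\oplus\mathbb{Z}/2$ \emph{is} the union of its three order-$2$ subgroups. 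This motivates targeting this structure as $H$ whenever $q-1$ has at least three prime factors.

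Fix three distinct primes $\ell_1,\ell_2,\ell_3\mid q-1$. I would try to build $E/K$ so that, for each $i\in\{1,2,3\}$, the mod-$\ell_i$ image of Galois equals the central subgroup $\{\pm I\}\subset\mathrm{SL}_2(\mathbb{Z}/\ell_i\mathbb{Z})$, realized by a non-trivial quadratic character $\chi_i$ with fixed field $K(E[\ell_i])=K(\sqrt{d_i})$, and so that $\chi_1\chi_2\chi_3=1$, equivalently $d_1d_2d_3\in K^{\times 2}$. Then $K(E[\ell_1\ell_2\ell_3])/K$ is a biquadratic extension with Galois group $\mathbb{Z}/2\oplus\mathbb{Z}/2$, and its three index-$2$ subgroups are precisely the restrictions of $H_{\ell_1},H_{\ell_2},H_{\ell_3}$; by the Klein-four observation these exhaust the Galois group. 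For any additional prime $\ell\mid q-1$, I would arrange $K(E[\ell])$ to coincide with one of the three quadratic fields $K(\sqrt{d_i})$, so that $H_\ell$ already lies in the union and $K(E[q-1])$ remains the biquadratic extension above. Cyclic torsion is then automatic since every $\chi_\ell$ is non-trivial.

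To realize such an $E$, one could start from a constant curve $E_0/\mathbb{F}_q$ whose $\ell_i$-torsion is fully $\mathbb{F}_q$-rational for each $i$---available via Waterhouse's classification of elliptic-curve group structures over finite fields, since $\ell_i\mid q-1$---and then construct a non-isotrivial twist of $E_0$ over $K=\mathbb{F}_q(T)$ tuned to yield the three desired quadratic characters. The main obstacle I foresee is that a single quadratic twist over $K$ forces $\chi_1=\chi_2=\chi_3$ and hence $\chi_1\chi_2\chi_3=\chi_1\neq 1$, so something more delicate is needed. One possible approach is to write $E$ in Weierstrass form and impose by hand a finite system of conditions on the coefficients so that each $\ell_i$-division polynomial splits over $K$ while the $y$-coordinates of the $\ell_i$-torsion lie in the prescribed $K(\sqrt{d_i})$ with $d_1d_2d_3\in K^{\times 2}$; non-isotriviality is then secured by choosing the parameters so that $j_E\notin\mathbb{F}_q$.
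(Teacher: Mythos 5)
Your key observation is the right one and matches the paper's: if $q-1$ has at least three prime factors, one can try to make $H=\Gal(K(E[q-1])/K)$ a Klein four-group covered by its three proper order-$2$ subgroups, each of which sits inside some $H_\ell$, so that $\delta(E/K,1,1)=0$ while every $H_\ell$ is proper (hence cyclic torsion). That is exactly the structural target of the paper's proof.

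However the construction you sketch has a gap and, more seriously, a step that cannot work as stated when $p>2$. You propose to make the mod-$\ell_i$ image of Galois equal to the central group $\{\pm I\}\subset\mathrm{SL}_2(\mathbb{Z}/\ell_i\mathbb{Z})$ for three odd primes $\ell_i\mid q-1$. But when $p>2$ we have $2\mid q-1$, and any scalar matrix in $\mathrm{SL}_2(\mathbb{Z}/(q-1)\mathbb{Z})$ reduces to the identity in $\mathrm{SL}_2(\mathbb{Z}/2\mathbb{Z})$ (since $-1\equiv1$ mod $2$). So if $H$ were contained in the centre, then $H_2=H$, $K(E[2])=K$, and the torsion is \emph{not} cyclic. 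The paper is forced to confront exactly this: in its $p>2$ construction the fourth element of $H$ is not scalar but has a non-trivial upper-triangular unipotent part $\left(\begin{smallmatrix}r&2^{\alpha-1}q_1q_2\\0&r\end{smallmatrix}\right)$, precisely so that its mod-$2$ image is a non-trivial unipotent and $H_2$ is proper. Your proposal has no mechanism to supply such a non-scalar piece. (Your plan is consistent only with the $p=2$ case, where $q-1$ is odd and $\{\pm I\}$ is genuinely non-trivial mod every prime divisor; that is also where the paper uses purely scalar matrices.)

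Beyond this, you frankly acknowledge you do not know how to realize the desired Galois image: a single quadratic twist gives $\chi_1=\chi_2=\chi_3$, and you leave the resolution open. The paper sidesteps this realization problem entirely by going the other way: by Igusa's theorem the curve $E/\mathbb{F}_q(t)$ with $j$-invariant $t$ has geometric mod-$(q-1)$ image the full $\mathrm{SL}_2(\mathbb{Z}/(q-1)\mathbb{Z})$ with no exceptional primes; one then simply sets $K=\bigl(\mathbb{F}_q(t)(E[q-1])\bigr)^H$ for the chosen subgroup $H\simeq(\mathbb{Z}/2\mathbb{Z})^2$ and takes $E/K$, which by Galois theory has $\Gal(K(E[q-1])/K)=H$ on the nose with constant field $\mathbb{F}_q$. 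This descent argument avoids Waterhouse's classification, explicit twists, and any hand-tuning of Weierstrass coefficients, and it is what makes the proof complete.
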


\begin{proof}
If $q-1$ has at least 3 distinct prime divisors, we can construct some subgroups \break $H\leq\mathrm{SL}_2(\mathbb{Z}/(q-1)\mathbb{Z})$ such that $H=\bigcup_\ell H_\ell$.

In the case of $p=2$ we can write $q-1=q_1q_2q_3$ with $q_i>2$ and pairwise relatively prime. Let $H$ contain the central elements $\mathrm{diag}(1),\mathrm{diag}(x_1),\mathrm{diag}(x_2)$ and $\mathrm{diag}(x_3)$ of $\mathrm{SL}_2(\mathbb{Z}/(q-1)\mathbb{Z})$, where we have $x_i\equiv 1~(\mathrm{mod}~q_j)$ if $i=j$ and $x_i\equiv -1~(\mathrm{mod}~q_j)$ if $i\neq j$. Then $H\simeq (\mathbb{Z}/2\mathbb{Z})^2$ and the $H_\ell$-s are the nontrivial subgroups of $H$.

In the case of $p>2$ we can write $q-1=2^\alpha q_1q_2$ with $\alpha\geq1$, $q_i$ odd and $(q_1,q_2)=1$. There exists $r\in\mathbb{Z}/(q-1)\mathbb{Z}$ such that $r\equiv 1~(\mathrm{mod}~2^\alpha)$, $r\equiv 1~(\mathrm{mod}~q_1)$, $r\equiv-1~(\mathrm{mod}~q_2)$. Let
\[H=\left\{\left(\begin{array}{cc}1&0\\0&1\end{array}\right),\left(\begin{array}{cc}-1&0\\0&-1\end{array}\right),\left(\begin{array}{cc}r&2^{\alpha-1}q_1q_2\\0&r\end{array}\right),\left(\begin{array}{cc}-r&2^{\alpha-1}q_1q_2\\0&-r\end{array}\right)\right\}.\]
As above we have $H\simeq (\mathbb{Z}/2\mathbb{Z})^2$ and the $H_\ell$-s are the nontrivial subgroups of $H$.

For example the smallest $q$ is $q=31$, then we have
\[H=\left\{\left(\begin{array}{cc}1 & 0\\ 0 & 1\end{array}\right),\left(\begin{array}{cc}11 & 15\\ 0 & 11\end{array}\right),\left(\begin{array}{cc}19 & 15\\ 0 & 19\end{array}\right),\left(\begin{array}{cc}29 & 0\\ 0 & 29\end{array}\right)\right\}\simeq (\mathbb{Z}/2\mathbb{Z})^2\leq\mathrm{SL}_2(\mathbb{Z}/30\mathbb{Z}).\]

Now our task is to find an elliptic curve $E/K$ such that the algebraic closure of the prime field in $K$ has $q$ elements and $\Gal(K(E[q-1])/K)=H$. Let $E/\mathbb{F}_q(t)$ be a curve with $j$-invariant $t$. Then by Igusa's results (\cite{Ig}, Theorem 3) we have $N(E/\mathbb{F}_q(t))=1$. We have \break $G=\Gal(\mathbb{F}_q(t)(E[q-1])/\mathbb{F}_q(t))\simeq\mathrm{SL}_2(\mathbb{Z}/(q-1)\mathbb{Z})$, hence we can identify $G$ with the special linear group. Let $H\leq G$ the subgroup we constructed above and $K=\left(\mathbb{F}_q(t)(E[q-1])\right)^H$ and consider $E/K$. It is clear that the constant field of $K$ has size $q$ and that $\Gal(K(E[q-1])/K)=H\leq G$. Moreover the only exceptional primes are the primes dividing $q-1$, since the geometric part of the extensions $\mathbb{F}_q(t)(E[\ell])$ are disjoint.
\end{proof}

\begin{rem}
\item It does not follow from the statement that for no $\nu\in V_{E/K}$ is $E_\nu(k_\nu)$ cyclic. However we have proven that for only a few $\nu$-s this is the case.
\end{rem}

\bibliography{char-two}

\begin{thebibliography}{Mur87}

\bibitem[BaSh]{BaSh}
W.~D. Banks and I.~E. Shparlinski.
\newblock Sato-{T}ate, cyclicity, and divisibility statistics on average for
  elliptic curves of small height.
\newblock {\em Israel J. Math}, 173:253--277, 2009.

\bibitem[ClKu]{ClKu}
D.A. Clark and M.~Kuwata.
\newblock Generalized {A}rtin's conjecture for primitive roots and cyclicity
  mod p of elliptic curves over function fields.
\newblock {\em Canad. Math. Bull.}, 38 (2):167--173, 1995.

\bibitem[CoMu]{CoMu}
A.~C. Cojocaru and M.~R. Murty.
\newblock Cyclicity of elliptic curves modulo p and elliptic curve analogues of
  {L}innik's problem.
\newblock {\em Math. Ann. 330}, 330:601--625, 2004.

\bibitem[Co1]{Co1}
A.~C. Cojocaru.
\newblock On the cyclicity of the group of {F}p-rational points of non-{CM}
  elliptic curves.
\newblock {\em Journal of Number Theory}, 96 (2):335--350, 2002.

\bibitem[Co2]{Co2}
A.~C. Cojocaru.
\newblock Cyclicity of {CM} elliptic curves modulo p.
\newblock {\em Trans. Amer. Math. Soc.}, 335 (7):2651--2662, 2003.

\bibitem[CT]{CT}
A.~C. Cojocaru and \'A. T\'oth.
\newblock The distribution and growth of the elementary divisors of the
  reductions of an elliptic curve over a function field.
\newblock {\em Journal of Number Theory}, 132:953--965, 2012.

\bibitem[GuMu]{GuMu}
R.~Gupta and M.~R. Murty.
\newblock Cyclicity and generation of points mod p on elliptic curves.
\newblock {\em Invent. Math.}, 101 (1):225--235, 1990.

\bibitem[HaVo]{HaVo}
C.~Hall and J.~F. Voloch.
\newblock Towards {L}ang-{T}rotter for elliptic curves over function fields
  (part 1).
\newblock {\em Pure Appl. Math. Q.}, 2 (1):163--178, 2006.

\bibitem[Ig]{Ig}
J.-I. Igusa.
\newblock Fibre systems of {J}acobian varieties. {III}. {F}ibre systems of
  elliptic curves.
\newblock {\em Amer. J. Math.}, 81:453--476, 1959.

\bibitem[LaTr]{LaTr}
S.~Lang and H.~Trotter.
\newblock Primitive points on elliptic curves.
\newblock {\em Bull. Amer. Math. Soc.}, 83 (2):289--292, 1977.

\bibitem[MuSc]{MuSc}
V.~K. Murty and J.~Scherk.
\newblock Effective versions of the {C}hebotarev density theorem for function
  fields.
\newblock {\em C. R. Acad. Sci. Paris}, 319, S\'erie I:523--528, 1994.

\bibitem[Mu1]{Mu1}
M.~R. Murty.
\newblock On {A}rtin's conjecture.
\newblock {\em Journal of Number Theory}, 16 (2):147--168, 1983.

\bibitem[Mu2]{Mu2}
M.~R. Murty.
\newblock On the supersingular reduction of elliptic curves.
\newblock {\em Proc. Indian Acad. Sci.}, 97 (1-3):247--250, 1987.

\bibitem[Na]{Na}
W.~Narkiewicz.
\newblock {\em Elementary and analytic theory of algebraic numbers}, volume~57.
\newblock Monografie Matematyczne, Warsawa, 1974.

\bibitem[Se1]{Se1}
J.-P. Serre.
\newblock Propri\'et\'es galoisiennes des points d'ordre fini des courbes
  elliptiques.
\newblock {\em Invent. Math.}, 15 (4):259--331, 1972.

\bibitem[Se2]{Se2}
J.-P. Serre.
\newblock {\em Summaries of courses of the 1977-78 academic year}, volume~54.
\newblock Coll\`ege de France, Paris, 1978.

\bibitem[Se3]{Se3}
J.-P. Serre.
\newblock Quelques applications du th\'eor\`eme de densit\'e de {C}hebotarev.
\newblock {\em Publ. Math. Inst. Hautes \'Etudes Sci.}, 54:123--201, 1981.

\bibitem[Si1]{Si1}
J.-H. Silverman.
\newblock {\em The {A}rithmetic of {E}lliptic {C}urves}.
\newblock Springer-Verlag, 1986.

\bibitem[Si2]{Si2}
J.-H. Silverman.
\newblock {\em {A}dvanced {T}opics in the {A}rithmetic of {E}lliptic {C}urves}.
\newblock Springer-Verlag, 1995.

\bibitem[Vo1]{Vo1}
J.~F. Voloch.
\newblock A note on elliptic curves over finite fields.
\newblock {\em Bull. Soc. Math. France}, 116 (4):455--458, 1988.

\bibitem[Vo2]{Vo2}
J.~F. Voloch.
\newblock Primitive points on constant elliptic curves over function fields.
\newblock {\em Bol. Soc. Bras. Mat.}, 21 (1):91--94, 1990.

\end{thebibliography}
\bibliographystyle{mystyle}

\end{document}